\newtheorem{definition}{Definition}
\newtheorem{theorem}[definition]{Theorem}
\newtheorem{lemma}[definition]{Lemma}
\newtheorem{proposition}[definition]{Proposition}
\newtheorem{remark}[definition]{Remark}
\newtheorem*{main}{Theorem A}
\newtheorem*{main1}{Corollary C}
\newtheorem*{main2}{Theorem B}
\newtheorem*{main3}{Corollary D}
\newtheorem*{main4}{Theorem E}
\newtheorem*{main5}{Theorem F}
\newtheorem*{main6}{Example G}
\newtheorem*{main8}{Lemma \cite[Lemma 2.3]{NowakSpakula}}
\newtheorem*{main9}{Theorem \cite[Lemma 4.1, Theorem 4.2]{NowakSpakula}}
\def\Xint#1{\mathchoice
{\XXint\displaystyle\textstyle{#1}}%
{\XXint\textstyle\scriptstyle{#1}}%
{\XXint\scriptstyle\scriptscriptstyle{#1}}%
{\XXint\scriptscriptstyle\scriptscriptstyle{#1}}%
\!\int}
\def\XXint#1#2#3{{\setbox0=\hbox{$#1{#2#3}{\int}$ }
\vcenter{\hbox{$#2#3$ }}\kern-.6\wd0}}
\def\dashint{\Xint-}
\begin{document}

\section*{Characterising Sobolev inequalities by controlled coarse homology and applications for hyperbolic spaces}

\begin{Large}
{Juhani Koivisto} \\ %\footnote{Supported by the Academy of Finland, project 252293.}
\end{Large} \\ \emph{Department of Mathematics and Statistics, University of Helsinki \\ address: P.O. Box 68 FI-00014 University of Helsinki \\ email: juhani.koivisto@helsinki.fi} \\

\textbf{Abstract.} 
We give a Sobolev inequality characterisation for the vanishing of a fundamental class in the controlled coarse homology of Nowak and \v{S}pakula for quasiconvex uniform spaces that support a local weak $(1,1)$-Poincar\'e inequality. As applications, we consider visual Gromov hyperbolic spaces and Carnot groups. \\  

\textbf{Mathematics subject classification (2000):}  Primary 53C23, 30L99; Secondary 58J32 \\

\textbf{Key words:} Controlled coarse homology, Sobolev inequalities. \\

\section{Introduction} \label{introduction}
In this article, a \emph{metric measure space} $(X,d,\mu)$ is a metric space $(X,d)$ with Borel regular outer measure $\mu$ such that $\mu(X) > 0$ and $\mu(B(x,r))< \infty$ for every $x \in X$ and $r > 0$. In what follows, we call a function $\varrho \colon [0,\infty) \rightarrow [0,\infty)$ a \emph{control function} if it is non-decreasing, $\varrho(0)=1$, and satisfies the conditions
\begin{align}
\varrho(\varepsilon +t) &\leq L(\varepsilon) \varrho(t) \label{L} \tag{$\varrho_1$}
\end{align} and \begin{align} \varrho(\varepsilon t) &\leq M(\varepsilon)\varrho(t), \label{M} \tag{$\varrho_2$}
\end{align}
for some functions $L,M \colon (0,\infty) \rightarrow (0,\infty)$ whenever $t, \varepsilon > 0$. The space $(X,d,\mu)$ satisfies the \emph{global} $\varrho$-\emph{weighted $(1,1)$-Sobolev inequality $(S_{1,1}^\varrho)$} if for some control function $\varrho$ there exists $C > 0$ and $o \in X$ such that 
\begin{align}
\int_X \vert u\vert d \mu \leq C \int_X \vert \nabla u \vert \varrho(d(o,\cdot)) d \mu \nonumber \end{align} for every $u \in N^{1,1}(X,d,\mu)$ with bounded support. Here, $N^{1,1}(X,d,\mu)$ is the \emph{Newton-Sobolev space} of equivalence classes of integrable functions $u \colon X \rightarrow [-\infty, \infty]$ with integrable upper gradient, and $\vert \nabla u\vert \colon X \rightarrow [0,\infty]$ the $1$-weak minimal upper gradient of $u$; see \cite[Section 7.1]{Heinonen}. If $(S_{1,1}^\varrho)$ holds for $\varrho \equiv 1$ we say that $(X,d,\mu)$ satisfies $(S_{1,1})$. \par  
Given $(X,d,\mu)$ what is the relationship between the $\varrho$-isoperimetry of $(X,d)$ and $(S_{1,1}^\varrho)$? Previously, $\varrho$-isoperimetry has been studied in \cite{Erschler, NowakSpakula, Zuk}, and Sobolev inequalities including $(S_{1,1})$ for Riemannian manifolds in \cite{Coul4, Coul2, Coul} and for metric measure spaces in \cite{koskela, RT}. Our main result is a metric measure version of a result of Nowak and \v{S}pakula \cite[Theorem 4.2]{NowakSpakula}, in part saying the following.   
\begin{main} \label{theoremA}
Let $(X,d,\mu)$ be a quasiconvex uniform space supporting a local weak $(1,1)$-Poincar\'e inequality. Then $(X,d,\mu)$ satisfies $(S_{1,1}^\varrho)$ for a given control function $\varrho$ if and only if $0=[\Gamma] \in H^\varrho_0(\Gamma)$ for any quasi-lattice $\Gamma \subseteq X$.\end{main}
%The fact that the statement does not depend on the quasi-lattice relies on the elementary observation that if $[\Gamma] \in H^\varrho_0(\Gamma)$ vanishes for some quasi-lattice $\Gamma \subseteq X$ it vanishes for any quasi-lattice in $X$; see Lemma \ref{vanishingql}.   

For the proof of Theorem A as well as several other equivalent statements; see Theorem \ref{ADLAD}. For the terminology related to controlled coarse homology $H^\varrho_*$ and its relationship with $\varrho$-isoperimetry, we refer to Section \ref{hom}. A \emph{quasi-lattice} in $(X,d)$ is any \emph{(C-)cobounded} set $\Gamma \subseteq X$, that is $N_C(\Gamma) := \lbrace x \in X \colon d(x,\Gamma) < C \rbrace = X$ for some $C>0$, which is \emph{uniformly locally finite} in the sense that there exists a function $N \colon (0,\infty) \rightarrow \mathbb{N}$ for which the cardinality $\# (\Gamma \cap B(x,r)) \leq N(r)$ for every open ball $B(x,r) \subseteq X$. The space $(X,d)$ is \emph{(Q-)Quasiconvex} if there exists $Q \geq 1$ such that for any $x,y \in X$ there is a rectifiable path $\gamma$ from $x$ to $y$ of length $\ell(\gamma) \leq Qd(x,y)$. A Borel regular outer measure $\mu$ is \emph{uniform} if there exist non-decreasing functions $f,g \colon (0,\infty) \rightarrow (0,\infty)$ such that $f(r) \leq \mu(B(x,r)) \leq g(r)$ for all $0 < r < \infty$ and $B(x,r) \subseteq X$. We say that $(X,d,\mu)$ is uniform if $\mu$ is uniform. Examples of uniform spaces are locally Ahlfors regular spaces and second-countable locally compact compactly generated groups with respect to a left-invariant metric and Haar measure; see \cite[Proposition 4.B.9]{CorHar}. A space $(X,d,\mu)$ supports a \emph{local weak $(1,p)$-Poincar\'e inequality (up to scale $R_P$)} for $1 \leq p < \infty$ if there exist $C_P,R_P> 0$ and $\tau \geq 1$ such that for all $B(x,r) \subseteq X$ with $0<r \leq R_P$, $0 < \mu(B(x,\tau r)) < \infty$ and \begin{align}\dashint_{B(x,r)} \vert u-u_{B(x,r)}\vert d \mu \leq C_P r \left( \dashint_{B(x,\tau r)} g_u^p d \mu \right)^{1/p} \nonumber \end{align} for $u \colon X \rightarrow \mathbb{R}$ such that $u \in {L}^1(B(x, \tau r),d,\mu)$ and its minimal $p$-weak upper gradient of $g_u \colon X \rightarrow [0,\infty]$; this is the local version of \cite[Proposition 8.1.3]{Heinonen}. Here as usual, $$f_A = \dashint_A f d \mu = \dfrac{1}{\mu(A)} \int_A f d \mu $$ assuming that $A \subseteq X$ is a $\mu$-measurable set $0 < \mu (A) < \infty$ and $f \colon A \rightarrow [-\infty, \infty]$ is integrable over $A$. \par 
Contained in the proof of Theorem \ref{ADLAD} is also the following partial result that does not rely on a local weak $(1,1)$-Poincar\'e inequality. 
\begin{main2}
Let $(X,d,\mu)$ be quasiconvex uniform metric measure space satisfying $(S_{1,1}^\varrho)$. Then $[\Gamma]=0$ in $H^\varrho_0(\Gamma)$ for any quasi-lattice $\Gamma \subseteq X$.
\end{main2}
%It is worth noting that Proposition \ref{INEQ1} used above does not require the space to support local weak $(1,1)$-Poincar\'e inequality. This gives a slightly weaker version of Theorem (A) independent of the local weak Poincar\'e inequality.

We now list some immediate applications motivating Theorem A.
 %As expected, there is a relationship between non-amenability, vanishing of the fundamental class in the controlled coarse homology, and global Sobolev inequalities. 

\begin{main1} \label{yaddayadda}
Let $(X,d,\mu)$ and $(X', d', \mu')$ be quasiconvex uniform spaces that support a local weak $(1,1)$-Poincar\'e inequality. If $(X,d)$ and $(X',d')$ are quasi-isometric, then $(X,d,\mu)$ satisfies $(S_{1,1}^\varrho)$ if and only if $(X',d',\mu')$ satisfies $(S_{1,1}^\varrho)$.  
\end{main1}
Recall that $(X,d)$ and $(X',d')$ are \emph{quasi-isometric} if there exists $f \colon X \rightarrow X'$ and constants $\lambda \geq 1$ and $\mu \geq 0$ such that $$ \lambda^{-1} d(x,x') - \mu \leq d'(f(x),f(x')) \leq \lambda d(x,x') + \mu$$ for all $x,x' \in X$, and $f(X)\subseteq X'$ is $\mu$-cobounded. Corollary C now follows from Theorem A as controlled coarse homology is a quasi-isometry invariant; see \cite[Corollary 2.3]{NowakSpakula}. %The above terminology above differs from that in \cite{Coul}; in particular, even for connected Riemannian manifolds supporting a local Poincar\'e inequality, \cite[Theorem 7.1]{Coul} does not imply Corollary D for $\varrho \equiv 1$ by \cite[Example 5.8]{RT}. \par  
A metric space $(X,d)$ with a quasi-lattice $\Gamma \subseteq X$ is \emph{amenable} if for any $\varepsilon, r > 0$ there exists a non-empty finite $F \subseteq \Gamma$ such that $$\dfrac{\# \partial_r F}{\# F} < \varepsilon,$$ where $\partial_r F = \lbrace x \in \Gamma \colon d(x,\Gamma)<r \textrm{ and } d(x, \Gamma \setminus F) < r\rbrace$. If $(X,d)$ is not amenable, we say that it is \emph{non-amenable}. As observed by Block and Weinberger, a space $(X,d)$ with a quasi-lattice $\Gamma \subseteq X$ is non-amenable if and only if $0=[\Gamma] \in H^1_0(\Gamma)$ where $H^1_0(\Gamma)$ denotes $0$-dimensional controlled coarse homology group for $\varrho \equiv 1$; see \cite[Proposition 2.3, Theorem 3.1]{BlockWeinberger1} as well as \cite{NowakSpakula}. With this in mind, we give the following characterisation. 
\begin{main3} \label{AM}
Let $(X,d,\mu)$ be a quasiconvex uniform space that supports a local weak $(1,1)$-Poincar\'e inequality. Then $(X,d)$ is non-amenable if and only if $(X,d, \mu)$ satisfies $(S_{1,1})$.
\end{main3}
Corollary D follows directly from Theorem A and the characterisation \cite[Theorem 3.1]{BlockWeinberger1}. Note the similarity between Corollary D and \cite[Theorem 7.1]{Coul}; see also \cite[Example 5.8]{RT}.  %The above corollary has immediate applications for hyperbolic spaces.
\begin{main4} \label{BIGTHEOREM3}
Let $(X,d,\mu)$ be a quasiconvex uniform visual Gromov hyperbolic space defined using the Gromow product. If $(X,d,\mu)$ supports a local weak $(1,1)$-Poincar\'e inequality and its Gromov boundary $\partial X$ is connected and contains at least two points, $(X,d,\mu)$ satisfies $(S_{1,1})$. 
\end{main4}
\begin{proof}
Since $(X,d,\mu)$ is uniform visual and Gromov hyperbolic with connected boundary containing at least two points, $0=[\Gamma] \in H^1_0(\Gamma)$ for any quasi-lattice $\Gamma \subseteq X$; see \cite{Juhani1}. The claim now follows from Theorem A.  
\end{proof}
We give a further application of Corollary E to the Dirichlet problem at infinity that generalises a result of Cao \cite[Corollary 1.1]{Cao}; see also \cite{Ilkka}.  
\begin{main5} \label{DIR}
Suppose $(X,d,\mu)$ is a locally compact quasiconvex visual Gromov hyperbolic metric measure space defined using the Gromov product having uniform measure that supports a local weak $(1,1)$-Poincar\'e inequality. Suppose its Gromov boundary $\partial X$ is connected and contains at least two points. Then, if $f \colon \partial X \rightarrow \mathbb{R}$ is a bounded continuous function, there exists a continuous function $u \colon X^* \rightarrow \mathbb{R}$ on the Gromov closure $X^*$ of $X$ that is $p$-harmonic for $p > 1$ in $X$ and $u \vert \partial X = f$. 
\end{main5}
\begin{proof}
By Theorem E, the space $(X,d,\mu)$ satisfies $(S_{1,1})$ and hence the corresponding $(p,p)$-Sobolev inequality for $1 \leq p < \infty$; see \cite[Example 8]{Ilkka}. By H\"older's inequality, $(X,d,\mu)$ supports a local weak $(1,p)$-inequality for $1 \leq p < \infty$ as well. Thus, $(X,d,\mu)$ satisfies all the assumptions of \cite[Theorem 1.1]{Ilkka} (see Lemma \ref{UNIBG}) and the claim follows.  
\end{proof}
We finish with an example illustrating the case when $\varrho \not \equiv 1$. Write \textbf{$f \preceq g$} for non-decreasing functions $f,g \colon [0,\infty) \rightarrow [0,\infty)$ for which there exist constants $\lambda,\mu >0$ and $c \geq 0$ such that $f(r) \leq \lambda g(\mu r + c)$ for all $r \geq 0$. Also, write $f \lnsim g$ if $f \preceq g$ but $g \not \preceq f$.  
\begin{main6}
The first real Heisenberg group $(\mathcal{H}_1(\mathbb{R}),d_\mathcal{H},\mu)$ with Heisenberg metric satisfies $(S_{1,1}^\varrho)$ for $\varrho(t)=t+1$ but not $(S_{1,1}^{\xi})$ for any other control function $\xi(t) \lnsim t+1$. 
\end{main6}
\begin{proof}
As the first integer Heisenberg group $\mathcal{H}_1(\mathbb{Z}) \leq \mathcal{H}_1(\mathbb{R})$ is a uniform lattice, there exists a quasi-isometry $$f \colon (\mathcal{H}_1(\mathbb{Z}),d_S) \rightarrow (\mathcal{H}_1(\mathbb{R}),d_H)$$ where $d_S$ is the word metric; see \cite[Definition 4.B.1]{CorHar} and \cite[Proposition 5.C.3]{CorHar}. In particular, $H_0^\varrho(\mathcal{H}_1(\mathbb{Z})) \cong H^\varrho_0(\mathcal{H}_1(\mathbb{R}))$ are isomorphic. As the group $\mathcal{H}_1(\mathbb{Z})$ is infinite polycyclic, $0=[\mathcal{H}_1(\mathbb{Z})] \in H^\varrho_0(\mathcal{H}_1(\mathbb{Z}))$ if and only if $\varrho(t)=t+1$; see \cite[Corollary 5.5]{NowakSpakula}. In particular, $0 \neq [\mathcal{H}_1(\mathbb{Z})] \in H^{\xi}_0(\mathcal{H}_1(\mathbb{Z}))$ for $\xi(t) \lnsim t+1$. The claim now follows from Theorem A.\end{proof} 
Similar arguments hold for Carnot groups; again Theorem A gives a homological way to deduce $(S_{1,1}^\varrho)$ from algebraic growth data. 
\\ \\
\textbf{Acknowledgements} I would like to thank Ilkka Holopainen for introducing me to the topic of Sobolev inequalities and for providing me with unpublished notes by Aleksi V\"ah\"akangas on global Sobolev inequalities on Gromov hyperbolic spaces. I also wish to thank Piotr Nowak for many discussions on growth homology, Pekka Pankka whose comments and suggestions more than improved the text at hand, Antti Per\"al\"a for many conversations on related topics, the Technion for its hospitality during my stay from January to May 2014, Uri Bader and Tobias Hartnick for many inspiring conversations, and Eline Zehavi for all her help during this stay. Last, I would like to thank the Academy of Finland, projects 252293, 271983, and the ERC grant 306706, for financial support, and DOMAST for travel support.

\section{Tools of controlled coarse homology} \label{hom}
We first recall some terminology; see \cite{NowakSpakula} for details. A metric space $(X,d)$ is \emph{uniformly coarsely proper} if it has a quasi-lattice $\Gamma \subseteq X$. \par 
\begin{remark} \label{ULFremark}
A metric space $(X,d)$ is uniformly coarsely proper if and only if there exists $r_b > 0$ and $N \colon (r_b, \infty) \times (r_b,\infty) \rightarrow \mathbb{N}$ such that, for all $R > r > r_b$, any open ball of radius $R$  in $X$ can be covered by $N(R,r)$ open balls of radius $r$ in $X$; see \cite[Section 3]{CorHar}.
\end{remark}
A pointed uniformly coarsely proper space $(X,d,o)$ always has a quasi-lattice $\Gamma \ni o$. For $q \in \mathbb{N}$, we denote by $(X^{q+1},d,\bar{o})$ the corresponding pointed $(q+1)$-Cartesian product with basepoint $\bar{o}=(o, \dots, o)$ and metric $$d(\bar{x}, \bar{y}) = \max_{0 \leq i \leq q} d(x_i,y_i)$$ where $\bar{x}=(x_0, \dots, x_q) \in X^{q+1}$ and $\bar{y} = (y_0, \dots, y_q) \in X^{q+1}$.
For a quasi-lattice $\Gamma \ni o$ and a control function $\varrho$, we denote by $C^{\varrho}_q(\Gamma)$ the space of functions $c \colon \Gamma^{q+1} \rightarrow \mathbb{R}$ for which 
\begin{itemize}
\item[(a)] there exists a constant $K(c) \geq 0$, which may depend on $c$, such that $\vert c(\bar{x}) \vert \leq K(c) \varrho(d(\bar{x},\bar o))$ for all $\bar{x} \in \Gamma^{q+1}$;
\item[(b)] $c$ is alternating, that is $c(x_{\sigma(0)}, \dots, x_{\sigma(q)}) = \mathrm{sign}(\sigma)c(x_0, \dots, x_q)$ for all $(x_0, \dots, x_q) \in \Gamma^{q+1}$ and all permutations $\sigma \colon \lbrace 0, \dots, q \rbrace \rightarrow \lbrace 0, \dots, q\rbrace$;
\item[(c)] there exists a constant $P(c) \geq 0$, which may depend on $c$, such that $c(x_0, \dots, x_q) = 0$ if $\max_{i \neq j} d(x_i,x_j) > P(c)$.
\end{itemize}
Note that $C_q^\varrho(\Gamma)$ is an $\mathbb{R}$-module that does not depend on the choice of basepoint by (\ref{L}). A function $c \in C^{\varrho}_q(\Gamma)$ is called a \emph{controlled coarse $q$-chain} and we write $$c= \sum_{(x_0, \dots, x_q) \in \Gamma^{q+1}} c(x_0, \dots, x_q) [x_0, \dots, x_q]$$ where the abstract $q$-cell $[x_0, \dots, x_q] \in C^\varrho_q(\Gamma)$ is the characteristic function $\chi_{(x_0, \dots, x_q)} \colon \Gamma^{q+1} \rightarrow \mathbb{R} $ of the point $(x_0, \dots, x_q)$. 
The \emph{controlled coarse homology} $H^\varrho_*(\Gamma)$ is the homology of the chain complex $$\cdots \xrightarrow{\partial_3} C_2^\varrho(\Gamma) \xrightarrow{\partial_{2}}  C^\varrho_1(\Gamma)\xrightarrow{\partial_1} C^\varrho_0(\Gamma)\xrightarrow{\partial_0} 0$$ where the boundary homomorphism $\partial_q \colon C^\varrho_q(\Gamma) \rightarrow C^\varrho_{q-1}(\Gamma)$ is given by $$\partial_q([x_0, \dots, x_q]) = \sum_{i=0}^q (-1)^i[x_0, \dots,\hat{x}_i, \dots, x_q]$$ for each abstract $q$-cell $[x_0, \dots, x_q]$ and extended linearly to $C^q(\Gamma)$ for $q \in \mathbb{N}\setminus \lbrace 0\rbrace$; as usual, $[x_0, \dots, \hat{x}_i, \dots,x_q]$ denotes the abstract $q$-cell obtained from $[x_0, \dots, x_q]$ by omitting its i:th coordinate. In particular, $\partial_{q-1} \circ \partial_q=0$ and $\partial_q c \in C^\varrho_{q-1}(\Gamma)$ by (\ref{L}). The \emph{$q$-dimensional controlled coarse homology group} is explicitly
$$H^\varrho_q(\Gamma) = \ker \partial_q / \mathrm{im} \, \partial_{q+1}.$$
%We frequently write $H^\varrho_q(X)$ meaning the $q$-dimensional controlled coarse homology group of $X$ with respect to any quasi-lattice $\Gamma \subseteq X$; justified by the fact that $H^\varrho_*$ is a quasi-isometry invariant \cite[Corollary 2.3]{NowakSpakula}. \par 
A special role is played by the homology class $[\Gamma] \in H_0^\varrho(\Gamma)$ of the characteristic function $$\chi_\Gamma = \sum_{x \in \Gamma}  [x] \in C_0^\varrho(\Gamma),$$ called the \emph{fundamental class}. Its vanishing characterises the $\varrho$-isoperimetry of the space. In what follows we use the notation $\vert (x,y) \vert:= d(\bar{o},(x,y))$.
\begin{main9} \label{NSremark}
For a quasi-lattice $\Gamma \ni o$, assume that there exists $C \in (0,1]$ such that $d(x,y) \geq C$ whenever $x,y \in \Gamma$ are distinct, and that for all $x,y \in \Gamma$ there is a sequence $(x=x_0, \dots, x_n=y)$ in $\Gamma$ such that $n \leq d(x,y)$ and $d(x_i,x_{i+1}) \leq 1$ for every $0 \leq i \leq n-1$. Then, the following are equivalent:
\begin{itemize}
\item[(1)] $0=[\Gamma] \in H^{\varrho}_0(\Gamma)$, 
\item[(2)] there exists $C' > 0$ such that for every finitely supported $\eta \colon \Gamma \rightarrow \mathbb{R}$ \begin{align}\sum_{x \in \Gamma} \vert \eta(x)\vert \leq C'\left( \sum_{x \in \Gamma} \sum_ {y \in \overline{B}(x,1)} \vert \eta(x)-\eta(y)\vert \varrho \left(\vert (x,y)\vert\right)\right) \nonumber \end{align} where $\overline{B}(x,1)= \lbrace y \in \Gamma \colon d(x,y) \leq 1\rbrace$, 
\item[(3)] there exists $C''>0$ such that for all finite $F \subseteq \Gamma$ \begin{align}\# F \leq C'' \sum_{x \in \partial F} \varrho(d(o,x)), \nonumber \end{align} where 
$\partial F = \lbrace x \in \Gamma \colon d(x,F)=1 \, \mathrm{or} \, d(x, \Gamma \setminus F) = 1\rbrace.$
\end{itemize}  
\end{main9}

\begin{main8} \label{NS3}
Suppose $\Gamma \subseteq X$ is a quasi-lattice for which there exists $c = \sum_{x \in \Gamma} c(x) [x] \in C^\varrho_0(\Gamma)$ such that $ \inf_{x \in \Gamma} c(x) > 0$ and $[c]=0$ in $H^\varrho_0(\Gamma)$. Then $0 =[\Gamma] \in H^\varrho_0(\Gamma)$. 
\end{main8}
This leads us to the following observation which shows that if $[\Gamma]=0$ for some quasi-lattice $\Gamma \subseteq X$ then $[\Gamma']=0$ for every quasi-lattice $\Gamma' \subseteq X$.
\begin{lemma} \label{vanishingql}
Let $f \colon \Gamma \rightarrow \Gamma'$ be a quasi-isometry between quasi-lattices. Then, $[\Gamma]=0$ in $H^\varrho_0(\Gamma)$ if and only if $[\Gamma']=0$ in $H^\varrho_0(\Gamma')$. 
\end{lemma}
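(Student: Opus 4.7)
The plan is to establish $[\Gamma]=0 \Rightarrow [\Gamma']=0$ directly; the converse follows by applying the same argument to a coarse inverse $g \colon \Gamma' \to \Gamma$ of $f$. So suppose $\Gamma = \partial \eta$ for some $\eta \in C^\varrho_1(\Gamma)$, and the aim is to produce $\eta' \in C^\varrho_1(\Gamma')$ with $\partial \eta' = \Gamma'$.

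First I would set up the pushforward: $f$ induces a chain map $f_\sharp \colon C^\varrho_*(\Gamma) \to C^\varrho_*(\Gamma')$ by $f_\sharp([x_0, \dots, x_n]) = [f(x_0), \dots, f(x_n)]$; the commutation $\partial f_\sharp = f_\sharp \partial$ and preservation of alternation are formal. Bounded propagation is preserved since $f$ is a quasi-isometry, and the controlled growth condition is preserved because $\vert f_\sharp(\overline{x})\vert_{d',f(o)} \leq \lambda \vert \overline{x}\vert_{d,o} + \mu$ combined with $(\ref{L})$ and $(\ref{M})$ gives $\varrho(\vert f_\sharp(\overline{x})\vert) \leq L(\mu) M(\lambda) \varrho(\vert \overline{x}\vert)$; a change of basepoint as noted after Definition \ref{chain} lets us use any fixed $o' \in \Gamma'$. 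Applying $f_\sharp$ to $\Gamma = \partial \eta$ gives $f_\sharp(\Gamma) = \partial f_\sharp(\eta)$, so the task reduces to finding $\xi \in C^\varrho_1(\Gamma')$ with $\partial \xi = \Gamma' - f_\sharp(\Gamma)$; then $\eta' := f_\sharp(\eta) + \xi$ satisfies $\partial \eta' = \Gamma'$.

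The local Ponzi construction of $\xi$ is the heart of the proof. Unpack $f_\sharp(\Gamma) = \sum_{z \in f(\Gamma)} n(z)\,[z]$ with $n(z) = \#f^{-1}(z)$. The quasi-isometric inequality applied to $d'(f(x_1), f(x_2)) = 0$ forces $f^{-1}(z) \subseteq B(x_0, \lambda \mu) \cap \Gamma$ for any $x_0 \in f^{-1}(z)$; together with uniform local finiteness of $\Gamma$ this gives $n(z) \leq N$ uniformly. Essential surjectivity of $f$ with coboundedness constant $\mu'$ then yields a map $\phi \colon \Gamma' \to \Gamma$ with $d'(f(\phi(y)), y) \leq \mu'$, arranged so that $\phi$ restricted to $f(\Gamma)$ selects a representative preimage for each $z$; uniform local finiteness of $\Gamma'$ gives uniform bounds on the fibres $\phi^{-1}(x) \subseteq B(f(x), \mu') \cap \Gamma'$. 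Around each $z \in f(\Gamma)$ a bounded-diameter cluster inside $\Gamma'$ now carries both the surplus $n(z) - 1$ at $z$ and the deficits at points $y \in \phi^{-1}(z) \setminus \lbrace z\rbrace$; $\xi$ is built by summing bounded-propagation transport 1-simplices $[z, y]$ with appropriate signs that locally cancel the difference $\Gamma' - f_\sharp(\Gamma)$.

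The main obstacle is the combinatorial bookkeeping: checking that these local transports assemble into a single 1-chain whose boundary is exactly $\Gamma' - f_\sharp(\Gamma)$ and whose coefficients remain uniformly bounded, so that $\xi$ is genuinely controlled. The uniform bound on cluster sizes, the bounded valence of the Rips graph of $\Gamma$ from Lemma \ref{IMPRO1}, and the uniform bound $n(z) \leq N$ are what make the Ponzi scheme genuinely \emph{local} -- no mass needs to be moved beyond a universally bounded distance, so the control function enters only through the basepoint-change constants $L(\mu) M(\lambda)$ and not through any long-range transport.
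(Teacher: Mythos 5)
Your reduction to one direction via a coarse inverse, and your pushforward chain map $f_\sharp$ with the verification that it preserves the alternating, propagation, and control conditions, are all fine and consistent with what the paper uses implicitly. The gap is in the "local Ponzi construction of $\xi$": the chain $\Gamma' - f_\sharp(\Gamma)$ cannot, in general, be written as the boundary of a bounded-propagation $1$-chain with uniformly bounded coefficients, because its positive and negative coefficients need not balance on bounded clusters. Concretely, its coefficient at $y \in \Gamma' \setminus f(\Gamma)$ is $+1$ and at $z \in f(\Gamma)$ is $1-n(z) \leq 0$; if $f$ is injective the surplus $n(z)-1$ vanishes identically and there is nothing to cancel the $+1$'s against. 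Take $\Gamma = \Gamma' = \mathbb{Z} \subseteq \mathbb{R}$ and $f(n)=2n$: then $\Gamma' - f_\sharp(\Gamma) = \sum_{n \, \mathrm{odd}}[n]$, which is essentially the fundamental class of the quasi-lattice $2\mathbb{Z}+1$ and bounds no uniformly finite $1$-chain at all (this would contradict amenability of $\mathbb{Z}$). Producing a \emph{controlled} $\xi$ with $\partial \xi = \Gamma' - f_\sharp(\Gamma)$ is exactly equivalent to $[\Gamma']=[f_\sharp(\Gamma)]$ in $H^\varrho_0(\Gamma')$, i.e.\ to the conclusion you are trying to prove, so this step cannot be dismissed as bookkeeping.

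The paper routes around this by never attempting a local cancellation. From $[\Gamma]=0$ it deduces $[f(\Gamma)]=0$ in $H^\varrho_0(f(\Gamma))$ via \cite[Lemma 2.4]{NS} (using that $f_\sharp(\Gamma)$ has coefficients $a_y \geq 1$), which supplies for each $y \in f(\Gamma)$ an infinite $1$-tail $t_y$ carrying its mass to infinity, with $t=\sum_y t_y$ controlled. Each missing point $w \in \Gamma' \setminus f(\Gamma)$ is then handled by prepending a single edge $[w,y]$ to the tail of a point $y \in f(\Gamma)$ within the coboundedness radius $C$, and the control survives because each $y$ acquires at most $N(C)$ new clients. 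So the correction is not a local transport between surplus and deficit but an extension of the tail system; this is the idea your proposal is missing. If you want to keep your $f_\sharp(\eta)$ framework, you would still have to convert $f_\sharp(\eta)$ into tails (or invoke \cite[Lemma 2.4]{NS} as the paper does) before the missing points of $\Gamma'$ can be absorbed.
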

\begin{proof}
The quasi-isometry $f \colon \Gamma \rightarrow \Gamma'$ induces a chain map $f_q \colon C^\varrho_q(\Gamma) \rightarrow C^\varrho_q(\Gamma')$ extending the map $[x_0, \dots,x_q] \mapsto [f(x_0), \dots, f(x_q)]$ linearly to $C^\varrho_q(\Gamma)$. By (\ref{L}) and (\ref{M}), $f_q$ is well-defined. In particular
$$f_0 \left(\sum_{x \in \Gamma} [x] \right) = \sum_{x \in \Gamma} [f(x)] = \sum_{y \in f(\Gamma)} c(y) [y] = c' \in C^\varrho_0(\Gamma')$$ where $c(y) = \# f^{-1}(y) \geq 1$ for $y \in f(\Gamma)$. Since $f(\Gamma) \subseteq \Gamma'$ is a quasi-lattice and $0 = [\Gamma]$ implies that $0 = [c'] \in H^\varrho_0(\Gamma')$ there exists for every $y \in f(\Gamma)$ a controlled coarse $1$-chain $$t_y = \sum_{i=0}^\infty[x_i,x_{i+1}] \in C^\varrho_1(f(\Gamma))$$ where $x_{0} = y$ so that $$t = \sum_{y \in f(\Gamma)} t_y \in C_1^\varrho(f(\Gamma))$$ by the proof of \cite[Lemma 2.3]{NowakSpakula}; see also \cite[Lemma 2.4]{BlockWeinberger1}. By coboundedness, fix $C>0$ such that $N_C(f(\Gamma)) = \Gamma'$. To begin, let $y_1 \in f(\Gamma)$ and let $$t_{w,y_1} = [w,y_1] + t_{y_1} \in C^\varrho_1(\Gamma')$$ for each $w \in B(y_1,C) \setminus \lbrace y_1 \rbrace$. Since $\Gamma'$ is uniformly locally finite, there is at most $\#(B(y_1,C) \cap \Gamma') \leq N(C)$ chains $t_{w,y_1}$. Next, let $y_2 \in f(\Gamma) \setminus \lbrace y_1 \rbrace$ and let $$t_{w,y_2} = [s,y_2] + t_{y_2} \in C^\varrho_1(\Gamma')$$ for each $w \in (B(y_2,C) \setminus \lbrace y_2 \rbrace) \setminus B(y_1,C)$. Again, there is at most $N(C)$ chains $t_{w,y_2}$. Continuing in the obvious way, we obtain a controlled coarse $1$-chain $$t' = \sum_{i=1}^\infty t_{w,y_i} + \sum_{y \in f(\Gamma)} t_y \in C ^\varrho_1(\Gamma')$$ whose boundary is $\partial_1 t' = \sum_{y \in \Gamma'} [y]$. In other words, $0 = [\Gamma'] \in H^\varrho_0(\Gamma')$ as claimed.

%Since $\partial_1 t_y = [y]$, it follows that $\partial_1 t = \chi_{f(\Gamma)}$ and $0 = [f(\Gamma)] \in H^\varrho_0(f(\Gamma))$. This is almost what we want; the claim follows if we can extend $t \in C^\varrho_1(f(\Gamma))$ to $t' \in C^\varrho_0(\Gamma')$ adding a controlled coarse $1$-chain $t_y$ to $t$ for each remaining point $y \in \Gamma' \setminus f(\Gamma)$ which is possible since $f(\Gamma) \subseteq \Gamma'$ is a quasi-lattice. To begin, fix $C>0$ such that $N_C(f(\Gamma))=\Gamma'$ and $\# B(z,C) \cap \Gamma' \leq N(C) $ for every $z \in \Gamma'$. For every $y \in f(\Gamma)$ and $w \in B(y,C) \cap (\Gamma' \setminus f(\Gamma))$ let $$t_{w,y} = [w,y] + t_y.$$ Now $t_{w,y} \in C^{\varrho}_1(\Gamma')$ for which $\partial_1 t_{w,y} = [w]$, and since there is at most $N(C)$ points in each $B(y,C)$ $$s = \sum_{w \in \Gamma' \setminus f(\Gamma)}t_{w,y} \in C^{\varrho}_0(\Gamma').$$ Let $t'= t+s$. Now $t' \in C^\varrho_0(\Gamma')$ and $c'=\partial_1 t' \in C^\varrho_0(\Gamma')$ such that $c'(x) \geq 1$ for every $x \in \Gamma'$. In particular $0=[\Gamma'] \in H^\varrho_0(\Gamma')$ by \cite[Lemma 2.4]{NowakSpakula}. 
\end{proof}

\section{Uniform metric measure spaces, discretisation, and smoothing} 
A metric measure space $(X,d,\mu)$ is a \emph{$(DV)_{\mathrm{loc}}$ space} if it has the \emph{$(DV)_{\mathrm{loc}}$ property} saying that there exists a function $C \colon (0,\infty) \rightarrow (0,\infty)$ such that \begin{align}0 < \mu(B(x,2r)) \leq C(r)\mu(B(x,r)) < \infty \nonumber \end{align} for all $B(x,r) \subseteq X$; see \cite{Coul}. This implies that the space is separable; see \cite[Lemma 3.3.30]{Heinonen}. Examples of $(DV)_{\mathrm{loc}}$ spaces are locally compact groups acting by measure preserving isometries on metric measure spaces \cite[Example 5.4]{RT}, and uniform spaces with $C(r)= g(2r)/f(r)$. 
\subsection{Discretisation and smoothing: from discrete to smooth} \label{onemore}
%A sequence $(x_0, \dots, x_n)$ of points in $(X,d)$ is an \emph{$\varepsilon$-path} from $x_0$ to $x_n$ of length $n$ if $\varepsilon > 0$ and $d(x_i,x_{i+1}) \leq \varepsilon$ for every $0 \leq i \leq n-1$. $(X,d)$ is \emph{$\varepsilon$-large-scale geodesic} if there exist constants $a>0$, $b \geq 0$, and $\varepsilon>0$ such that for every $x,y \in X$ there exists a $\varepsilon$-path from $x$ to $y$ of length at most $ad(x,y)+b$. For example, a quasiconvex metric space is $\varepsilon$-large-scale geodesic for every $\varepsilon>0$; as implicit in Lemma \ref{1ST}. \par %As usual, saying that $(X,d)$ is \textbf{quasiconvex} we mean that there exists $Q \geq 1$ such that for every $x,y \in X$ there exists a rectifiable path $\gamma \colon [a,b] \rightarrow X$ from $x$ to $y$ whose length $\ell(\gamma) \leq Q d(x,y)$.
A \emph{maximal $\varepsilon$-net} in $(X,d)$ is a $\varepsilon$-cobounded subset $N(X,\varepsilon) \subseteq X$ such that $d(x,y) \geq \varepsilon$ whenever $x,y \in N(X,\varepsilon)$ are distinct. We also write $q \sim p$ saying that $q$ is a \emph{neighbour} of $q$ if $p,q \in N(X,\varepsilon)$ and $0 < d(p,q) \leq 3 \varepsilon$.  %and $$X = \bigcup_{p \in N(X,\varepsilon)} B(p,\varepsilon).$$ 
By Zorn's lemma, for any $\varepsilon > 0$ and $o \in X \neq \emptyset$ there exists a maximal $\varepsilon$ net $N(X, \varepsilon) \ni o$. \par 
Adapting the argument for doubling spaces in \cite[Section 4.1]{Heinonen}, we record the following fact. %that allows us to speak about controlled coarse homology of $(DV)_{\mathrm{loc}}$ spaces.  
\begin{remark} \label{heinonenfinal}
A $(DV)_\mathrm{loc}$ space $(X,d,\mu)$ is uniformly coarsely proper as a metric space. In particular any $N(X,\varepsilon)$ is a quasi-lattice.
\end{remark}
 
%\begin{definition}
%The {Rips graph} of $(X,d)$ is the pair $(N(X,\varepsilon), \delta)$ where $N(X,\varepsilon) \subseteq X$ is any maximal $\varepsilon$-net in $(X,d)$ and $\delta \colon N(X,\varepsilon) \times N(X,\varepsilon) \rightarrow \mathbb{N} \cup \lbrace \infty \rbrace$ is the {edge path length}
%\begin{itemize}
%\item[1.] $\delta(x,y)=0$ if $x=y$;
%\item[2.] $\delta(x,y) = k$ if the shortest $3 \varepsilon$-path from $x$ to $y$ is of length $k$;
%\item[3.] $\delta(x,y)= \infty$ if there is no $3\varepsilon$-path from $x$ to $y$,
%\end{itemize}
%and write $x \sim y$ whenever $\delta(x,y)= 1$ meaning that $0 < d(x,y) \leq 3 \varepsilon$.
%\end{definition}
%The Rips graph $(N(X,\varepsilon),\delta)$ was introduced by Kanai in \cite{Kanai}, who called this an $\varepsilon$-net; we prefer the name Rips graph after the Rips complex. The following approximation is well-known:
%\begin{lemma} \label{RIPSS} \cite[Proposition 3.D.16]{CorHar}
%Suppose $(X,d)$ is uniformly coarsely proper and $3 \varepsilon$-large-scale geodesic. Then the edge path length $\delta$ is a metric on the quasi-lattice $N(X,\varepsilon)$, and $(N(X,\varepsilon), \delta)$ is quasi-isometric to $(X,d)$. 
%\end{lemma} 
%Denoting by $N^{1,1}(X,d,\mu)$ the \textbf{Newtonian space} consisting of (equivalence classes) of integrable functions $u \colon X \rightarrow [-\infty, \infty]$ with $1$-weak minimal upper gradient $\vert \nabla u\vert$; see \cite[Section 7.1]{Heinonen}, it reads: 
\begin{lemma} \label{1ST}
Let $(X,d, \mu)$ be an unbounded quasiconvex $(DV)_{\mathrm{loc}}$ space that supports a local weak $(1,1)$-Poincar\'e inequality up to scale $R_P$. Then, given $0< \varepsilon \leq R_P/4$, a quasi-lattice $N(X, \varepsilon) \ni o$, where $\mu(\lbrace o\rbrace)=0$, and a control function $\varrho \colon [0,\infty) \rightarrow [0,\infty)$, there exists  $C>0$ for which $$\sum_{p \in N(X, \varepsilon)} \sum_{q \sim p} \vert u_{B(p,4 \varepsilon)}-u_{B(q,4 \varepsilon)}\vert \varrho(d(o,p)) \mu(B(p, \varepsilon)) \leq C \int_X \vert \nabla u(x)\vert \varrho(d(o,x)) d \mu (x)$$ for every $u \in N^{1,1}(X,d,\mu)$.
\end{lemma}
This lemma is well-known for complete Riemannian manifolds of bounded geometry when $\varrho \equiv 1$ \cite[Lemma 33]{IH}; see also \cite{Kanai2}. Here the point to note is that using inequality (\ref{L}) the classic result can additionally be weighted by the control function $\varrho$ which connects it to controlled coarse homology. \\ \\
\emph{Proof of Lemma \ref{1ST}.}
Let $p \in N(X,\varepsilon)$ and $x \in B(p, 8 \tau \varepsilon)$ where $\tau \geq 1$. Now $d(o,p) \leq d(o,x) + d(x,p) \leq d(o,x) + 8 \tau \varepsilon$, and since $\varrho$ is non-decreasing  
\begin{align} (1) \, \, \, \, \varrho(d(o,p)) \int_{B(p,8 \tau \varepsilon)} \vert \nabla u (x)\vert d \mu (x) &\leq \int_{B(p,8 \tau \varepsilon)} \vert \nabla u (x)\vert \varrho(d(o,x) + 8 \tau \varepsilon) d \mu (x)\nonumber \\  &= \int_{B(p, 8 \tau \varepsilon) \setminus \lbrace o\rbrace} \vert \nabla u(x) \vert \varrho(d(o,x) + 8 \tau \varepsilon) d \mu (x) \nonumber\\ &\leq L(8 \tau \varepsilon) \int_{B(p, 8 \tau \varepsilon)} \vert \nabla u (x)\vert \varrho(d(o,x)) d \mu (x), \nonumber
\end{align}
by (\ref{L}). The proposition follows from estimating (1) from below using the local weak $(1,1)$-Poincar\'e inequality. First, choose a neighbour $q \sim p$ noting that the space is quasiconvex and unbounded. Now $B(p,4 \tau \varepsilon) \cup B(q, 4 \tau \varepsilon) \subseteq B(p, 8 \tau\varepsilon )$ and \begin{align}
\int_{B(p, 8 \varepsilon \tau )} \vert \nabla u (x)\vert d \mu(x) \geq \dfrac{1}{2} \int_{B(p, 4 \tau \varepsilon)} \vert \nabla u(x)\vert d \mu (x) + \dfrac{1}{2}\int_{B(q,4 \tau \varepsilon)} \vert \nabla u (x) \vert d \mu (x) \nonumber. 
\end{align}
By the local weak $(1,1)$-Poincar\'e inequality 
\begin{align}
\dashint_{B(p,4 \tau \varepsilon)} \vert \nabla u(x)\vert d \mu (x) \geq \dfrac{1}{4 \varepsilon C_P}\dashint_{B(p,4 \varepsilon)} \vert u(x) - u_{B(p, 4 \varepsilon)}\vert d \mu(x), \nonumber
\end{align}
and since $\mu(B(p, 4 \tau \varepsilon)) \geq \mu(B(p, 4 \varepsilon))$, \begin{align}
\int_{B(p,4 \tau \varepsilon)} \vert \nabla u(x)\vert d \mu (x) \geq  C \int_{B(p,4 \varepsilon)} \vert u(x) - u_{B(p, 4 \varepsilon)}\vert d \mu(x) \nonumber
\end{align}
for some $C > 0$. Hence
\begin{align}
&\int_{B(p,8 \tau \varepsilon)} \vert \nabla u (x)\vert d \mu (x) \nonumber \\ &\geq \dfrac{1}{2}\int_{B(p, 4 \tau \varepsilon)} \vert \nabla u(x)\vert d \mu (x) + \dfrac{1}{2}\int_{B(q,4 \tau \varepsilon)} \vert \nabla u (x) \vert d \mu (x) \nonumber  \\ &\geq \dfrac{C}{2} \int_{B(p,4 \varepsilon)} \vert u(x) - u_{B(p, 4 \varepsilon)}\vert d \mu(x) + \dfrac{C}{2} \int_{B(q,4 \varepsilon)} \vert u(x) - u_{B(q, 4 \varepsilon)}\vert d \mu(x) \nonumber \\ &\geq \dfrac{C}{2} \int_{B(p,4 \varepsilon) \cap B(q, 4 \varepsilon)} \left( \vert u(x) - u_{B(p, 4 \varepsilon)}\vert + \vert u(x) - u_{B(q, 4 \varepsilon)}\vert \right) d \mu(x) \nonumber \\ &\geq \dfrac{C}{2} \vert u_{B(p, 4 \varepsilon)} - u_{B(q, 4 \varepsilon)}\vert \int_{B(p,\varepsilon)} d \mu(x) \nonumber \\ &= \dfrac{C}{2} \vert u_{B(p, 4 \varepsilon)} - u_{B(q, 4 \varepsilon)}\vert \mu(B(p, \varepsilon)), \nonumber 
\end{align} since $B(p, \varepsilon) \subseteq B(p, 4 \varepsilon) \cap B(q, 4 \varepsilon)$. Using this to estimate (1) gives
\begin{align}
\int_{B(p, 8 \tau \varepsilon)} \vert \nabla u (x)\vert \varrho(d(o,x)) d \mu (x) &\geq \dfrac{\varrho(d(o,p))}{L(8 \tau \varepsilon)} \int_{B(p,8 \tau \varepsilon)}  \vert \nabla u (x)\vert d \mu (x)  \nonumber \\ &\geq \dfrac{C \varrho(d(o,p))}{2L(8 \tau \varepsilon)} \vert u_{B(p, 4 \varepsilon)} - u_{B(q, 4 \varepsilon)}\vert \mu(B(p, \varepsilon)). \nonumber 
\end{align}
Since $N(X,\varepsilon)$ is uniformly locally finite, the number of neighbours $q \sim p$ is uniformly bounded and hence 
\begin{align} 
\int_{B(p, 8 \tau \varepsilon)} \vert \nabla u (x)\vert &\varrho(d(o,x)) d \mu (x) \nonumber \\  &\geq C' \varrho(d(o,p)) \sum_{q \sim p} \vert u_{B(p, 4 \varepsilon)} - u_{B(q, 4 \varepsilon)}\vert \mu(B(p, \varepsilon)). \nonumber 
\end{align}
for some $C'>0$ independent of $u$. Similarly, every $x \in X$ belongs to a uniformly bounded number of open balls of radius $8 \tau \varepsilon$ having a center in $N(X,\varepsilon)$, and altogether
\begin{align}
&\sum_{p \in N(X, \varepsilon)} \sum_{q \sim p} \vert u_{B(p, 4 \varepsilon)} - u_{B(q, 4 \varepsilon)}\vert \varrho(d(o,p))\mu(B(p, \varepsilon)) \nonumber \\ &\leq C'^{-1} \sum_{p \in N(X, \varepsilon)} \int_{B(p, 7 \tau \varepsilon)} \vert \nabla u (x)\vert \varrho(d(o,x))d \mu (x) \nonumber \\ &\leq C'' \int_{X} \vert \nabla u (x)\vert \varrho(\vert x\vert )d \mu (x) \nonumber\end{align}
for some $C''>0$ independent of $u$, which proves the claim. \par
\qed \par
We now show that the inequality obtained in Lemma \ref{1ST} implies $(S^\varrho_{1,1})$. This time we need both (\ref{L}) and (\ref{M}).

 \begin{proposition} \label{INEQ1}
Let $(X,d,\mu)$ be a quasiconvex $(DV)_{\mathrm{loc}}$ space that supports a local weak $(1,1)$-Poincar\'e inequality up to scale $R_P$. Let $N(X, \varepsilon) \ni o$ be a quasi-lattice, where $\mu(\lbrace o\rbrace)=0$ and $0 < \varepsilon \leq R_P/4$. Suppose there exists a control function $\varrho \colon [0,\infty) \rightarrow [0,\infty)$ and a constant $C > 0$ such that $$\sum_{p \in N(X, \varepsilon)} \vert v(p)\vert \mu(B(p, \varepsilon)) \leq C \sum_{p \in N(X, \varepsilon)} \sum_ {q \sim p} \vert v(p)-v(q)\vert \varrho(\vert (p,q)\vert) \mu(B(p, \varepsilon))$$ for every $v \colon N(X, \varepsilon) \rightarrow \mathbb{R}$ having finite support. Then $(X,d,\mu)$ satisfies $(S_{1,1}^\varrho)$.
\end{proposition}
\begin{proof}
Let $u \colon X \rightarrow [0,\infty)$ be a function in $N^{1,1}(X,d,\mu)$ having bounded support. Now, $$u_{B(\cdot, 4 \varepsilon)} \colon N(X, \varepsilon) \rightarrow [0, \infty)$$ is finitely supported, and since $\vert(p,q) \vert = d(\bar{o},(p,q)) \leq  2 d(o,p) + 3\varepsilon$, we have 
\begin{align}
&\sum_{p \in N(X, \varepsilon)} u_{B(p,4 \varepsilon)} \mu(B(p, \varepsilon)) \nonumber \\ &\leq C \sum_{p \in N(X, \varepsilon)} \sum_{q \sim p} \vert u_{B(p, 4 \varepsilon)} - u_{B(q, 4 \varepsilon)}\vert \varrho(\vert (p,q)\vert) \mu(B(p, \varepsilon)) \nonumber \\ \nonumber &\leq \nonumber C\sum_{p \in N(X, \varepsilon) \setminus \lbrace o \rbrace} \sum_{q \sim p} \vert u_{B(p, 4 \varepsilon)} - u_{B(q, 4 \varepsilon)}\vert \varrho(2d(o,p) + 3 \varepsilon) \mu(B(p, \varepsilon)) \\ &+ C \sum_{q \sim o} \vert u_{B(o, 4 \varepsilon)} - u_{B(q,4 \varepsilon)}\vert \varrho(3 \varepsilon) \mu(B(o, \varepsilon)). 
\nonumber 
%\\ &\leq 2C(2 \varepsilon)C(\varepsilon) \sum_{p \in N(X, \varepsilon) \setminus \lbrace o \rbrace} \sum_{q \sim p} \vert u_{B(p, 4 \varepsilon)} - u_{B(q, 4 \varepsilon)}\vert \varrho(2d(o,p) + 3 \varepsilon) \mu(B(p, \varepsilon)), \nonumber 
\end{align}  
%since $\vert(p,q) \vert = d(\bar{o},(p,q)) \leq d(o,p) + d(o,q) \leq 2d(o,p) + d(p,q) \leq 2 d(o,p) + 3\varepsilon = 2 d(o,p) + 3 \varepsilon$. \par 
In this inequality, the first sum on the right-hand side contains every neighbour of $o$. To estimate the second sum observe that $\varrho(3 \varepsilon) \leq \varrho(2d(o,p)+3 \varepsilon)$ for every $p \in N(X,\varepsilon)$, and when $p \sim o$ we have $B(o,\varepsilon) \subseteq B(o, 4 \varepsilon) \subseteq B(p, 8 \varepsilon)$ which gives $\mu(B(o,\varepsilon)) \leq C(4 \varepsilon) C(2\varepsilon)C(\varepsilon) \mu(B(p,\varepsilon))$ using the $(DV)_{\mathrm{loc}}$ property. Put together, this gives the estimate \begin{align}
&\sum_{p \in N(X, \varepsilon)} u_{B(p,4 \varepsilon)} \mu(B(p, \varepsilon)) \nonumber \\ &\leq 2CC(4 \varepsilon)C(2\varepsilon)C(\varepsilon) \sum_{p \in N(X, \varepsilon) \setminus \lbrace o \rbrace} \sum_{q \sim p} \vert u_{B(p, 4 \varepsilon)} - u_{B(q, 4 \varepsilon)}\vert \varrho(2d(o,p) + 3 \varepsilon) \mu(B(p, \varepsilon)).\nonumber %\\ &\leq 2CC(2 \varepsilon)C(\varepsilon) L(3 \varepsilon)M(2) \sum_{p \in N(X, \varepsilon) \setminus \lbrace o\rbrace} \sum_{q \sim p} \vert u_{B(p, 4 \varepsilon)} - u_{B(q, 4 \varepsilon)}\vert \varrho(d(o,p)) \mu(B(p, \varepsilon)).\nonumber
\end{align} 
Now, using both (\ref{L}) and (\ref{M}) this gives 
\begin{align}
&\sum_{p \in N(X, \varepsilon)} u_{B(p,4 \varepsilon)} \mu(B(p, \varepsilon)) \nonumber \\ &\leq C' \sum_{p \in N(X, \varepsilon) \setminus \lbrace o\rbrace} \sum_{q \sim p} \vert u_{B(p, 4 \varepsilon)} - u_{B(q, 4 \varepsilon)}\vert \varrho(d(o,p) ) \mu(B(p, \varepsilon))\nonumber \nonumber \\ &\leq C' \sum_{p \in N(X, \varepsilon)} \sum_{q \sim p} \vert u_{B(p, 4 \varepsilon)} - u_{B(q, 4 \varepsilon)}\vert \varrho(d(o,p) ) \mu(B(p, \varepsilon)).\nonumber %\\ &\leq 2CC(2 \varepsilon)C(\varepsilon) L(3 \varepsilon)M(2) \sum_{p \in N(X, \varepsilon) \setminus \lbrace o\rbrace} \sum_{q \sim p} \vert u_{B(p, 4 \varepsilon)} - u_{B(q, 4 \varepsilon)}\vert \varrho(d(o,p)) \mu(B(p, \varepsilon)).\nonumber
\end{align} 
for some $C'>0$ independent of $u$. By Lemma \ref{1ST},
\begin{align}
\sum_{p \in N(X,\varepsilon)} \sum_{q \sim p} \vert u_{B(p,4\varepsilon)} - u_{B(q,4 \varepsilon)} \vert \varrho(d(o,p)) \mu(B(p,\varepsilon)) \leq C' \int_X \vert \nabla u(x)\vert \varrho(d(o,x)) d \mu (x),  \nonumber
\end{align}
so
\begin{align}
\sum_{p \in N(X, \varepsilon)} u_{B(p,4 \varepsilon)} \mu(B(p, \varepsilon)) \nonumber \leq C'' \int_X \vert \nabla u(x)\vert \varrho(d(o,x)) d \mu (x)
\end{align}
for some $C''>0$ independent of $u$.
%\begin{align}
%&\leq C_2L(3 \varepsilon)M(2) \sum_{p \in N(X, \varepsilon)} \sum_{q \sim p} \vert u_{B(p, 4 \varepsilon)} - u_{B(q, 4 \varepsilon)}\vert \varrho(\vert p\vert_d) \mu(B(p, \varepsilon)) \nonumber \\ &\leq C_2 C_1L(3 \varepsilon)M(2) \int_X \vert \nabla u(x)\vert \varrho(\vert x\vert_d) d \mu (x), \label{ups}
%\end{align}
%where we used \begin{align}\vert(p,q) \vert_d &= d(o,p) + d(o,q) \leq 2d(o,p) + d(p,q) \leq 2 d(o,p) + 3 \varepsilon, \nonumber \end{align} from which it follows $\varrho(\vert(p,q) \vert_d) \leq L(3 \varepsilon)M(2) \varrho(\vert p\vert_d)$. 
On the other hand, by the $(DV)_\mathrm{loc}$ property 
\begin{align}
\int_X u(x) d \mu(x) \leq \sum_{p \in N(X, \varepsilon)} \int_{B(p, 4 \varepsilon)} u(x) d \mu(x) = \sum_{p \in N(X, \varepsilon)} u_{4B(p,4\varepsilon)}\mu(B(p,4 \varepsilon)) \nonumber \\ \leq C(2 \varepsilon)C(\varepsilon)\sum_{p \in N(X, \varepsilon)} u_{4B(p, \varepsilon)} \mu(B(p, \varepsilon)) \nonumber \end{align}from which the claim follows for $u \colon X \rightarrow [0,\infty)$ in $N(X,d,\mu)$ having bounded support. The claim for any $u \in N^{1,1}(X,d,\mu)$ having bounded support follows by replacing $u$ with $\vert u\vert$ and noticing that $\vert \nabla \vert u \vert\vert \leq \vert \nabla u\vert$.   \end{proof}
%Thus, $(S_{1,1}^\varrho)$ is implied by a discrete weighted Sobolev inequality in the class of quasiconvex $(DV)_{\mathrm{loc}}$ spaces supporting a local weak $(1,1)$-Poincar\'e inequality. In fact, $(S_{1,1}^\varrho)$ implies this discrete weighted Sobolev inequality even if the space does not support a local weak $(1,1)$-Poincar\'e inequality; which we prove next. 
\subsection{From smooth to discrete}
To begin, recall the notion of Lipschitz partition of unity associated to $N(X,\varepsilon)$ and Lipschitz extensions.
\begin{definition} \cite[Section 1.12]{Heinonen2} 
A Lipschitz partition of unity associated to $N(X,\varepsilon)$ of a metric space $(X,d)$ is a locally finite family $\lbrace \varphi_p \colon p \in N(X,\varepsilon) \rbrace$ of $L$-Lipschitz functions $\varphi_p \colon X \rightarrow [0,1]$ such that $$\sum_{p \in N(X,\varepsilon)} \varphi_p(x)=1$$ for every $x \in X$ and $\varphi_p \vert (X \setminus B(p, 2 \varepsilon)) \equiv 0$.
\end{definition}
The following lemma is a modification of \cite[Section 1.12]{Heinonen2}; the proofs are essentially identical.
\begin{lemma} \label{LIPPART}
Let $(X,d)$ be a quasiconvex and uniformly coarsely proper space and $N(X,\varepsilon)$ a quasi-lattice where $0 < \varepsilon \leq 2$. Then, the family $\lbrace \varphi_p \colon p \in N(X,\varepsilon) \rbrace$ where $$\varphi_p(x) = \dfrac{\psi_p(x)}{\psi(x)},$$ $\psi_p(x)= \min \left \lbrace 1 ,\dfrac{2}{\varepsilon} \mathrm{dist} \left( x, X \setminus B(p, 3\varepsilon/ 2)\right) \right\rbrace $, and $\psi(x) = \sum_{p \in N(X,\varepsilon)} \psi_p(x)$, is a Lipschitz partition of unity associated to $N(X,\varepsilon)$.
\end{lemma} 
\begin{definition}
Let $(X,d)$ be a quasiconvex uniformly coarsely proper space and $N(X,\varepsilon)$ a quasi-lattice where $0 < \varepsilon \leq 2$. Given any function $v \colon N(X, \varepsilon) \rightarrow \mathbb{R}$, its locally Lipschitz extension $\overline{v} \colon X \rightarrow \mathbb{R}$ associated to $\lbrace \varphi_p \colon p \in N(X, \varepsilon) \rbrace$ is defined by $$\overline{v}(x) = \sum_{p \in N(X,\varepsilon)} v(p) \varphi_p(x),$$ where $\lbrace \varphi_p \colon p \in N(X, \varepsilon) \rbrace$ is the Lipschitz partition of unity associated to $N(X,\varepsilon)$. 
\end{definition}
The \emph{pointwise upper Lipschitz constant at $x \in X$} of a function $v \colon X \rightarrow \mathbb{R}$ from a metric space $(X,d)$ is $$\mathop{\mathrm{Lip}} v(x) = \limsup_{r \rightarrow 0} \sup_ {y \in B(x,r)} \dfrac{\vert v(x) - v(y)\vert}{r}.$$
Note that $\mathrm{Lip} \,\overline{v} \colon X \rightarrow [0,\infty]$ is an upper gradient of the locally Lipschitz extension $\bar{v} \colon X \rightarrow \mathbb{R}$ of $v \colon N(X,\varepsilon) \rightarrow \mathbb{R}$; see \cite[Lemma 6.2.6]{Heinonen}. We are now ready to prove the following lemma.
\begin{lemma} \label{INEQ2}
Let $(X,d,\mu)$ be a quasiconvex $(DV)_{\mathrm{loc}}$ space, $N(X, \varepsilon) \ni o$ a quasi-lattice where $0 < \varepsilon \leq 2$, $\mu(\lbrace o\rbrace)=0$, and $\varrho \colon [0,\infty) \rightarrow [0,\infty)$ a control function. Then there exists $C >  0$ such that $$\int_X \mathop{\mathrm{Lip}} \overline{v}(x) \varrho(d(o,x)) d \mu(x) \leq C \sum_{p \in N(X, \varepsilon)} \sum_{q \sim p}\vert v(p) - v(q)\vert \varrho(d(o,p)) \mu(B(p, \varepsilon))$$ for any $v \colon N(X, \varepsilon) \rightarrow \mathbb{R}$. 
\end{lemma}
\begin{proof}
Let $v \colon N(X, \varepsilon) \rightarrow \mathbb{R}$ be any function and $\overline{v} \colon X \rightarrow \mathbb{R}$ its locally Lipschitz extension as in Lemma \ref{LIPPART}. Arguing as in \cite[Lemma 3.2]{Ilkka}, there exists a constant $C>0$ such that, for any $p \in N(X, \varepsilon)$ and $x,y \in B(p, \varepsilon)$,
$$ \dfrac{\vert \overline{v}(x) - \overline{v}(y)\vert}{d(x,y)} \leq C \sum_{q \in B(p, 3 \varepsilon) \cap N(X, \varepsilon)} \vert v(q)-v(p) \vert.$$ In particular, $$\mathop{\mathrm{Lip}} \overline{v}(x) = \limsup_{r \rightarrow 0}  \sup_{y \in B(x,r)}\dfrac{\vert \overline{v}(x) - \overline{v}(y) \vert}{r} \leq C \sum_{q \in B(p, 3 \varepsilon) \cap N(X, \varepsilon)} \vert v(q)-v(p) \vert.$$ 
Thus,
\begin{align}
&\int_X \mathop{\mathrm{Lip}} \overline{v}(x) \varrho(d(o,x)) d \mu(x) \nonumber \leq \sum_{p \in N(X, \varepsilon)}\int_{B(p, \varepsilon)} \mathop{\mathrm{Lip}} \overline{v}(x) \varrho(d(o,x)) d \mu(x) \nonumber \\ &\leq C \sum_{p \in N(X, \varepsilon)} \sum_{q \in B(p, 3 \varepsilon) \cap N(X, \varepsilon)} \vert v(q)-v(p)\vert \int_{B(p, \varepsilon)} \varrho(d(o,x)) d \mu(x). \nonumber
\end{align}
The claim now follows by an application of inequality (\ref{L}). Indeed, if $x \in B(p, \varepsilon)$, then $d(o,x) \leq d(x,p) + d(p, o) \leq \varepsilon + d(o,p)$, and we have $\varrho(d(o,x)) \leq L(\varepsilon) \varrho(d(o,p))$ whenever $p \neq o$. Hence, 
\begin{align}
\int_X \mathop{\mathrm{Lip}} \overline{v}(x) \varrho(d(o,x)) d \mu(x) \leq CL(\varepsilon)\sum_{p \in N(X, \varepsilon)} \sum_{q \sim p} \vert v(q)-v(p)\vert \varrho(d(o,p)) \mu(B(p, \varepsilon))\nonumber  
\end{align} as claimed.
\end{proof}
At this point, we have the following intermediate version of \cite[Theorem 4.2]{NowakSpakula} for quasiconvex $(DV)_{\mathrm{loc}}$ spaces.

\begin{theorem} \label{SMOOTH}
If $(X,d,\mu)$ is a quasiconvex $(DV)_{\mathrm{loc}}$ space that supports a local weak $(1,1)$-Poincar\'e inequality up to scale $R_P$. Then the following are equivalent: 
\begin{enumerate}
\item[(1)] $(X,d,\mu)$ satisfies $(S_{1,1}^\varrho)$; 
\item[(2)] For any $0 < \varepsilon \leq \min \lbrace 2 , R_P/4\rbrace$ and $N(X, \varepsilon) \ni o$ such that $\mu(\lbrace o\rbrace)=0$, there exists  $C > 0$ such that $$\sum_{p \in N(X, \varepsilon)} \vert v(p)\vert \mu(B(p, \varepsilon)) \leq C \sum_{p \in N(X, \varepsilon)} \sum_{q \sim p} \vert v(p) - v(q)\vert \varrho(\vert(p,q) \vert) \mu(B(p, \varepsilon))$$ for every $v \colon N(X, \varepsilon) \rightarrow \mathbb{R}$ with finite support. 
\end{enumerate}
\end{theorem}
\begin{proof}
By Proposition \ref{INEQ1} it follows that (2) implies (1). To prove that that (1) implies (2) let $v \colon N(X, \varepsilon) \rightarrow [0,\infty)$ be finitely supported and let $\overline{v} \colon X \rightarrow [0,\infty)$ be its locally Lipschitz extension $$\overline{v}(x) = \sum_{p \in N(X,\varepsilon)} v(p) \varphi_p(x) = \sum_{p \in N(X,\varepsilon)} v(p) \dfrac{\psi_p(x)}{\psi(x)},$$
now with bounded support. Since $\overline{v}$ is locally Lipschitz, $\mathop{\mathrm{Lip}} \overline{v}$ is an upper gradient of $\overline{v}$. In particular, $\overline{v}$ and has a minimal $1$-weak upper gradient $\vert \nabla \bar{v}\vert$; see \cite[Theorem 6.3.20]{Heinonen}. Thus, by $(S_{1,1}^\varrho)$  
\begin{align}
\int_{X} \overline{v}(x) d \mu (x) \leq C \int_X \vert \nabla \overline{v}\vert \varrho(d(o,x)) d \mu (x) \leq C \int_X \mathop{\mathrm{Lip}}\overline{v}(x) \varrho(d(o,x)) d \mu(x). \nonumber
\end{align}
By Lemma \ref{INEQ2}, 
\begin{align}\int_{X} \mathop{\mathrm{Lip}} \overline{v} (x) \varrho(d(o,x)) d \mu (x) \leq C' \sum_{p \in N(X, \varepsilon)} \sum_{q \sim p}\vert v(p) - v(q)\vert \varrho(\vert (p,q)\vert) \mu(B(p, \varepsilon)). \nonumber 
\end{align}
Since $\psi$ appearing in the Lipschitz partition of unity is uniformly bounded, there exists $C''>0$ for which $\psi(x) \leq C''$ for all $x \in X$ and
\begin{align}
\int_{X} \overline{v}(x) d \mu (x) &= \int_X \sum_{p \in N(X, \varepsilon)} v(p) \varphi_p(x) d \mu (x) = \int_X \sum_{p \in N(X, \varepsilon)} v(p) \dfrac{\psi_p(x)}{\psi(x)} d \mu (x) \nonumber \\ &\geq \dfrac{1}{C''} \int_X \sum_{p \in N(X, \varepsilon)}v(p)\psi_p(x) d \mu(x) \nonumber \\  &\geq \dfrac{1}{C''} \sum_{p \in N(X, \varepsilon)} v(p) \mu(B(p, \varepsilon)),  \nonumber  
\end{align}
as $\psi_p \vert B(p, \varepsilon) \equiv 1$; and altogether for some $C''' > 0$ independent of $v$ $$\sum_{p \in N(X, \varepsilon)} v(p) \mu(B(p, \varepsilon)) \leq C''' \sum_{p \in N(X, \varepsilon)} \sum_{q \sim p}\vert v(p) - v(q)\vert \varrho(\vert (p,q)\vert) \mu(B(p, \varepsilon))$$ for every $v \colon N(X, \varepsilon) \rightarrow [0,\infty)$ with finite support. The general claim for any $v \colon N(X;\varepsilon) \rightarrow \mathbb{R}$ with finite support now follows observing that the claim holds for $\vert v \vert$ by the previous, and by the triangle inequality for $v$. 
\end{proof}

\subsection{Connecting $H^\varrho_0$ to $(S^\varrho_{1,1})$} 
Combining the previous results, we are ready to prove that the vanishing of a fundamental class in $H^\varrho_0$ of a quasiconvex $(DV)_{\mathrm{loc}}$ space that supports a local weak $(1,1)$-Poincar\'e inequality is characterised by $(S^\varrho_{1,1})$ whenever the space is uniform. We begin with the following fact. \par
%\begin{definition} \cite{Roe}
%A metric measure space $(X,d,\mu)$ is {uniform} if for some non-decreasing functions $f,g \colon (0,\infty) \rightarrow (0,\infty)$ $$f(r) \leq \mu(B(x,r)) \leq g(r)$$ for all $B(x,r) \subseteq X$ and all $0 < r < \infty$.
%\end{definition}
%Note that a uniform space is $(DV)_{\mathrm{loc}}$ with $C(r)= g(2r)/f(r)$. Examples of uniform spaces include second-countable locally compact compactly generated groups with respect to a left-invariant metric and Haar measure taking $f(r)=g(r)= \mu(B(e,r))$; see \cite[Proposition 4.B.9]{CorHar}. 
\begin{lemma} \label{UNIBG}
A quasiconvex uniform space $(X,d,\mu)$ has at most exponential volume growth.
\end{lemma}
\begin{proof}
Fix a quasi-lattice $N(X,\varepsilon)$ and let $k \in \mathbb{N}\setminus \lbrace 0\rbrace$. Since $N(X,\varepsilon)$ is uniformly locally finite any open ball $B(x,2k\varepsilon) \subseteq X$ can be covered by $N(3\varepsilon)^k$ balls of radius $\varepsilon$. Since $(X,d,\mu)$ is uniform, $$\mu(B(x,2k\varepsilon)) \leq g(\varepsilon){N(3 \varepsilon)^k}$$ for every $k \in \mathbb{N}\setminus \lbrace 0\rbrace$. 
%coarsely proper by Remark \ref{heinonenfinal}, there exists $r_b > 0$ such that if $R > r > r_b$ every open ball of radius $R$ can be covered by $N(R,r)$ open balls of radius $r$; see Remark \ref{ULFremark}. Without loss of generality, fix $r_0 > r_b$ and let $R=2^kr_0$ where $k \in \mathbb{N}$. Since the space is uniform $$\mu(B(x,2^kr_0)) \leq \left( \dfrac{g(2r_0)}{f(r_0)} \right)^k g(r_0),$$ from which the claim follows. 
\end{proof}
%We are now ready to prove the characterisation result.
\begin{theorem}
\label{ADLAD}
Let $(X,d,\mu)$ be a quasiconvex uniform space that supports a local weak $(1,1)$-Poincar\'e inequality up to scale $R_P$. Let $0 < \varepsilon \leq \min \lbrace 2, R_P/ 4\rbrace$, $N(X, \varepsilon) \ni o$, where $\mu(\lbrace o\rbrace)=0$, and $\varrho \colon [0,\infty) \rightarrow [0,\infty)$ a control function. Then, the following are equivalent:
\begin{enumerate}
\item[(1)] $(X,d,\mu)$ satisfies $(S_{1,1}^\varrho)$; 
\item[(2)] there exists $C_1 > 0$ such that for every $v \colon N(X, \varepsilon) \rightarrow \mathbb{R}$ with finite support $$\sum_{p \in N(X, \varepsilon)} \vert  v(p)\vert \mu(B(p,\varepsilon)) \leq C_1 \sum_{p \in N(X, \varepsilon)} \sum_{q \sim p} \vert v(p)-v(q)\vert \varrho(\vert (p,q)\vert) \mu(B(p, \varepsilon));$$
\item[(3)] there exists $C_2 > 0$ such that for every $v \colon N(X, \varepsilon) \rightarrow \mathbb{R}$ with finite support$$\sum_{p \in N(X, \varepsilon)} \vert  v(p)\vert \leq C_2 \sum_{p \in N(X, \varepsilon)} \sum_{q \sim p} \vert v(p)-v(q)\vert \varrho(\vert (p,q)\vert);$$ 
\item[(4)] $0 = [\Gamma] \in H^\varrho_0(\Gamma)$ for any quasi-lattice $\Gamma \subseteq X$.
\end{enumerate}
\end{theorem}
\begin{proof}
By Proposition \ref{SMOOTH}, (1) and (2) are equivalent. By uniformity $0 < f(\varepsilon) \leq \mu(B(p,\varepsilon)) \leq g(\varepsilon) < \infty$ for all $p \in N(X,\varepsilon)$, and so (2) and (3) are equivalent. Hence, it remains to prove that (3) and (4) are equivalent and we first show that (3) implies (4). First, we approximate $(X,d)$ by the space obtained from equipping $N(X,\varepsilon)$ with the edge path length $\delta \colon N(X,\varepsilon) \times N(X,\varepsilon) \rightarrow \mathbb{N} \cup \lbrace \infty \rbrace$ given by
\begin{itemize}
\item[] $\delta(x,y)=0$ if $x=y$,
\item[] $\delta(x,y) = k$ if the shortest $3 \varepsilon$-path from $x$ to $y$ is of length $k$,
\item[] $\delta(x,y)= \infty$ if there is no $3\varepsilon$-path from $x$ to $y$,
\end{itemize}
where a $3 \varepsilon$-path from $x$ to $y$ of length $k$ is any sequence of points $x=x_0, \dots, x_k=y$ in $N(X,\varepsilon)$ where $0 <d(x_i, x_{i+1}) \leq 3 \varepsilon$. Since $(X,d)$ is uniformly coarsely proper and ($Q$-)quasiconvex, $\delta$ is a metric on $N(X,\varepsilon)$ and $(N(X,\varepsilon), \delta)$ is quasi-isometric to $(X,d)$; see \cite[Proposition 3.D.16]{CorHar}, and \begin{align} \dfrac{1}{3 \varepsilon}d(q,p) \leq \delta(p,q) \leq \dfrac{Q}{\varepsilon}d(p,q) + 1 \tag{QI}\end{align} for all $p,q \in N(X,\varepsilon)$ adapting \cite[Lemma 2.5]{Kanai} for geodesic spaces to quasiconvex spaces. Thus $\varrho(d(\bar{o},(p,q))) \leq 3 \varepsilon \delta(\bar{o},(p,q))$ by (QI), and using (\ref{M}) we see that $(N(X,\varepsilon),\delta)$ satisfies 
\begin{align}\sum_{x \in N(X,\varepsilon)} \vert \eta(x)\vert \leq C_2M(3 \varepsilon)\left( \sum_{x \in \Gamma} \sum_ {\lbrace y \colon \delta(y,x)=1 \rbrace} \vert \eta(x)-\eta(y)\vert \varrho \left(\vert (x,y)\vert\right)\right) \nonumber \end{align} for every finitely supported $\eta \colon N(X,\varepsilon) \rightarrow \mathbb{R}$.
Equivalently, $0=[N(X,\varepsilon)] \in H^\varrho_0(N(X,\varepsilon))$ where $H^\varrho_0(N(X;\varepsilon))$ is defined using the metric $\delta$; see \cite[Lemma 4.1, Theorem 4.2]{NowakSpakula}. Since $\mathrm{id} \colon (N(X,\varepsilon),\delta) \rightarrow (N(X,\varepsilon),d)$ is a quasi-isometry, we conclude that $0=[(N(X,\varepsilon))] \in H^\varrho_0(N(X,\varepsilon))$, where $H_0^\varrho(N(X,\varepsilon))$ is defined using the metric $d$, and hence $0=[\Gamma] \in H^\varrho_0(\Gamma)$ for any quasi-lattice $\Gamma \subseteq X$ by Lemma \ref{vanishingql}. It remains to prove that (4) implies (3). %As previously, we begin by noting that $(X,d)$ and $(N(X,\varepsilon),\delta)$ are quasi-isometric by Lemma \ref{RIPSS}; explicitly $$\delta(p,q) \leq \dfrac{Q}{\varepsilon}d(p,q) + 1$$ for $(X,d)$ $Q$-quasiconvex similarly as for geodesic spaces in \cite[Lemma 2.5]{Kanai}. Thus, $\varrho(\delta(\bar{o},(p,q))) \leq L(1)M(Q/\varepsilon)\varrho( d(\bar{o},(p,q)))$ using (\ref{L}) and (\ref{M}). Now, 
By assumption, $0=[\Gamma] \in H^\varrho_0(\Gamma)$ for any quasi-lattice $\Gamma \subseteq X$; in particular for $N(X,\varepsilon) \subseteq X$. Since $\mathrm{id} \colon (N(X,\varepsilon),d) \rightarrow (N(X,\varepsilon),\delta)$ is a quasi-isometry, $0=[(N(X,\varepsilon))] \in H^\varrho_0(N(X,\varepsilon))$ defined using the metric $\delta$, equivalently, for some $D>0$  
\begin{align}\sum_{x \in N(X,\varepsilon)} \vert \eta(x)\vert \leq D\left( \sum_{x \in N(X,\varepsilon)} \sum_ {\lbrace y \colon \delta(y,x)=1 \rbrace} \vert \eta(x)-\eta(y)\vert \varrho \left(\vert (x,y)\vert\right)\right) \nonumber \end{align} for every finitely supported $\eta \colon N(X,\varepsilon) \rightarrow \mathbb{R}$. Applying (QI), (\ref{L}) and (\ref{M}), respectively, $\varrho(\delta(\bar{o},(p,q))) \leq L(1)M(Q/\varepsilon)\varrho( d(\bar{o},(p,q)))$. Using this to estimating the above inequality from above gives (3).
\end{proof}
Theorem A summarises this by stating the equivalence between (1) and (4) above. Theorem B follows from the observation that the local weak $(1,1)$-Poincar\'e inequality is not needed to prove that (1) implies (2) in Theorem \ref{SMOOTH}.  \\ \\

\bibliographystyle{abbrv}
\bibliography{kirjasto}
\end{document}